\newtheorem{Theorem}{Theorem}[section]
\newtheorem{Lemma}[Theorem]{Lemma}
\newtheorem{Corollary}[Theorem]{Corollary}
\newtheorem{Proposition}[Theorem]{Proposition}
\newtheorem{Remarks}[Theorem]{Remarks}
\def\fka{{\mathfrak a}}
\def\fkb{{\mathfrak b}}
\def\fkp{{\mathfrak p}}
\def\fkm{{\mathfrak m}}
\def\opn#1#2{\def#1{\operatorname{#2}}}
\opn\Spec{Spec}
\opn\Supp{Supp}
\opn\supp{supp}
\opn\Max{Max}
\opn\max{max}
\opn\Min{Min}
\opn\min{min}
\opn\Ass{Ass}
\opn\Assh{Assh}
\opn\Ann{Ann}
\opn\depth{depth}
\opn\rank{rank}
\opn\Mat{Mat}
\opn\Tot{Tot}
\opn\Sym{Sym}
\def\Rees{{\mathcal R}}
\def\S{{\mathcal S}}
\def\ZZ{\mathbb{Z}}  
\opn\div{div}
\opn\Div{Div}
\opn\cl{cl}
\opn\Cl{Cl}
\opn\Ker{Ker}
\opn\Coker{Coker}
\opn\Im{Im}
\opn\Hom{Hom}
\opn\Tor{Tor}
\opn\Ext{Ext}
\opn\End{End}
\opn\Fitt{Fitt}
\opn\Aut{Aut}
\opn\id{id}
\opn\nat{nat}
\opn\pff{pf}
\opn\Pf{Pf}
\opn\GL{GL}
\opn\SL{SL}
\opn\G{G}
\opn\E{E}
\opn\H{H}
\opn\M{M}
\opn\mod{mod}
\opn\ord{ord}
\opn\det{det}
\opn\Soc{Soc}
\opn\chara{char}
\opn\length{\ell}
\opn\pd{pd}
\opn\rk{rk}
\opn\projdim{proj\,dim}
\opn\injdim{inj\,dim}
\opn\rank{rank}
\opn\depth{depth}
\opn\grade{grade}
\opn\height{ht}
\opn\embdim{emb\,dim}
\opn\codim{codim}
\renewcommand{\tilde}{\widetilde}
\title{A Note on the Buchsbaum-Rim function of a parameter module}
\author{Futoshi Hayasaka}
\author{Eero Hyry}
\address{Department of Mathematics, School of Science and Technology, 
Meiji University, 1-1-1 Higashimita, Tama-ku, Kawasaki 214--8571, JAPAN}
\email{hayasaka@isc.meiji.ac.jp}
\address{Department of Mathematics and Statistics, University of Tampere, 
33014 Tampereen yliopisto, FINLAND}
\email{Eero.Hyry@uta.fi}
\date{\today}
\keywords{Buchsbaum-Rim function, multiplicity, parameter module}
\subjclass[2000]{Primary 13H15; Secondary 13D25}
\begin{document}

\setlength{\baselineskip}{15pt}


\begin{abstract}
In this article, we prove that the Buchsbaum-Rim function $\ell_A(\S_{\nu+1}(F)/N^{\nu+1})$ of a parameter module $N$ in $F$ is bounded above by $e(F/N) \binom{\nu+d+r-1}{d+r-1}$ for every integer $\nu \geq 0$. Moreover, it turns out that the base ring $A$ is Cohen-Macaulay once the equality holds for some integer $\nu$. As a direct consequence, we observe that the first Buchsbaum-Rim coefficient $e_1(F/N)$ of a parameter module $N$ is always non-positive. 
\end{abstract}

\maketitle

\section{Introduction}

Let $(A, \fkm)$ be a Noetherian local ring of dimension $d$. Let $F=A^r$ be a free module of rank $r>0$, and let $S=\S_A(F)$ be the symmetric algebra of $F$, which is a polynomial ring over $A$. For a submodule $M$ of $F$, let $\Rees(M)$ denote the image of the natural homomorphism  $\S_A(M) \to \S_A(F)$, which is a standard graded subalgebra of $S$. Assume that the quotient $F/M$ has finite length and $M \subseteq \fkm F$. Then we can consider the function $$ \lambda: \ZZ_{\geq 0} \to \ZZ_{\geq 0} \ ; \ \ \nu \mapsto \ell_A(S_{\nu+1}/M^{\nu+1})$$ where 
 $S_{\nu}$ and $M^{\nu}$ denote the homogeneous components of degree $\nu$ of $S$ and $\Rees(M)$, respectively. Buchsbaum and Rim studied this function in~\cite{BR2} in order to generalize the notion of the usual Hilbert-Samuel multiplicity of an $\fkm$-primary ideal. They proved that $\lambda(\nu)$ eventually coincides with a polynomial $P(\nu)$ of degree $d+r-1$. This polynomial can then be written in the form $$P(\nu)=\sum_{i=0}^{d+r-1}(-1)^i e_i(F/M)\binom{\nu+d+r-1-i}{d+r-1-i}$$ with integer coefficients $e_i(F/M)$. The coefficients $e_i(F/M)$ are called the {\it{Buchsbaum-Rim coefficients}} of $F/M$. The {\it{Buchsbaum-Rim multiplicity of}} $F/M$, denoted by $e(F/M)$, is now defined to be the leading coefficient $e_0(F/M)$. 

In their article Buchsbaum and Rim also introduced the notion of a parameter module (matrix), which generalizes the notion of a parameter ideal (system of parameters). The module $N$ in $F$ is said to be {\it{a parameter module in}} $F$, if the following three conditions are satisfied: (i) $F/N$ has finite length, (ii) $N \subseteq \fkm F$, and (iii) $\mu_A(N)=d+r-1$, where $\mu_A(N)$ is the minimal number of generators of $N$. 

A starting point of this article is the characterization of the Cohen-Macaulay property of $A$ given in \cite[Corollary 4.5]{BR2} by means of the equality $\ell_A(F/N)=e(F/N)$ for every parameter module $N$ of rank $r$ in $F=A^r$. Brennan, Ulrich and Vasconcelos observed in~\cite[Theorem 3.4]{BUV} that if $A$ is Cohen-Macaulay, then in fact $$\ell_A(S_{\nu+1}/N^{\nu+1}) = e(F/N)\binom{\nu+d+r-1}{d+r-1}$$ for all integers $\nu \geq 0$. Our main result is now as follows: 

\begin{Theorem}\label{main}
Let $(A, \fkm)$ be a Noetherian local ring of dimension $d>0$. 
\begin{enumerate}
\item[$(1)$] For any rank $r>0$, the inequality 
$$\ell_A(S_{\nu+1}/N^{\nu+1}) \geq e(F/N)\binom{\nu+d+r-1}{d+r-1} $$ 
always holds true for every parameter module $N$ in $F=A^r$ and for every integer $\nu \geq 0$. 
\item[$(2)$] The following statements are equivalent: 
\begin{enumerate}
\item[$(i)$] $A$ is a Cohen-Macaulay local ring; 
\item[$(ii)$] There exists an integer $r>0$ and a parameter module $N$ of rank $r$ in $F=A^r$ such that the equality $$\ell_A(S_{\nu+1}/N^{\nu+1}) = e(F/N)\binom{\nu+d+r-1}{ d+r-1} $$ holds true for 
some integer $\nu \geq 0$.
\end{enumerate}
\end{enumerate}
\end{Theorem}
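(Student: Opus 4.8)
The plan is to package the statement into the single graded $\Rees(N)$-module $\B=\bigoplus_{\nu\ge0}S_{\nu+1}/N^{\nu+1}$, whose Hilbert function is $\lambda(\nu)=\ell_A(S_{\nu+1}/N^{\nu+1})$, and to study the \emph{defect}
\[
\delta(\nu)\;=\;\ell_A(S_{\nu+1}/N^{\nu+1})-e(F/N)\binom{\nu+d+r-1}{d+r-1}.
\]
Since $\B_0=F/N$, we have $\delta(0)=\ell_A(F/N)-e(F/N)$, which by the Buchsbaum--Rim characterization \cite[Cor.~4.5]{BR2} is $\ge0$ and vanishes exactly when $A$ is Cohen--Macaulay. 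Thus both parts of the theorem would follow from a single \textbf{Core Lemma}: the function $\delta$ is non-decreasing. Indeed $\delta(\nu)\ge\delta(0)\ge0$ gives the inequality in $(1)$; and if equality holds at some $\nu_0$, then $0=\delta(\nu_0)\ge\delta(0)\ge0$ forces $\delta(0)=0$, i.e. $\ell_A(F/N)=e(F/N)$, so $A$ is Cohen--Macaulay, proving $(ii)\Rightarrow(i)$. The reverse implication $(i)\Rightarrow(ii)$ is immediate: when $A$ is Cohen--Macaulay one may take $r=1$ with $N$ a parameter ideal, and the Brennan--Ulrich--Vasconcelos computation \cite[Thm.~3.4]{BUV} gives equality for every $\nu$.

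To attack the Core Lemma I would bring in the Buchsbaum--Rim complex $\mathcal C^{(\nu)}_\bullet$ attached to a presentation $\phi\colon A^{d+r-1}\to F$ with $\Im\phi=N$. Its relevant features are that $H_0(\mathcal C^{(\nu)}_\bullet)=\B_\nu$; that, since $I_r(\phi)$ is $\fkm$-primary, the complex is exact over $\Spec A\setminus\{\fkm\}$, so every $H_i(\mathcal C^{(\nu)}_\bullet)$ has finite length; and that its Euler characteristic computes the Buchsbaum--Rim multiplicity, $\sum_i(-1)^i\ell_A(H_i(\mathcal C^{(\nu)}_\bullet))=e(F/N)\binom{\nu+d+r-1}{d+r-1}$. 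Comparing this with $\lambda(\nu)=\ell_A(H_0)$ yields the clean identity
\[
\delta(\nu)\;=\;\sum_{i\ge1}(-1)^{i-1}\ell_A\!\bigl(H_i(\mathcal C^{(\nu)}_\bullet)\bigr).
\]
The complex has length $d=\dim A$, so its top homology $H_d(\mathcal C^{(\nu)}_\bullet)$, being a finite-length submodule of a free module, vanishes; the defect is therefore an alternating sum of the lengths of $H_1,\dots,H_{d-1}$.

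The heart of the matter is to control this alternating sum uniformly in $\nu$. I would assemble the $\mathcal C^{(\nu)}_\bullet$ into one bigraded Buchsbaum--Rim complex over $S=\S_A(F)$, so that the homologies $\mathcal H_i=\bigoplus_\nu H_i(\mathcal C^{(\nu)}_\bullet)$ become finitely generated graded modules, and relate them to the local cohomology modules $H^{d-i}_{\fkm}(A)$, exactly as Koszul homology on a system of parameters recovers local cohomology in the ideal case. The goal is to rewrite $\delta(\nu)$ as a combination of the lengths $\ell_A\!\bigl(H^{j}_{\fkm}(A)\bigr)$ for $j<d$ with coefficients that are non-negative and non-decreasing in $\nu$. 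Such a formula would make $\delta$ monotone and, crucially, would show that $\delta(\nu_0)=0$ forces $H^{j}_{\fkm}(A)=0$ for all $j<d$, i.e. $\depth A\ge d$, thereby both establishing the Core Lemma and detecting the Cohen--Macaulay property.

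The hard part is precisely this sign analysis. A general alternating sum of homology lengths need not be non-negative, let alone monotone, and the vanishing of such a sum at one spot does not in general force acyclicity; so everything rests on exploiting the specific structure of the Buchsbaum--Rim complex, through its comparison with the local cohomology of $A$, to rule out cancellation. Note that the \emph{asymptotic} version is easy: reading leading terms in $\delta(\nu)\ge0$ for $\nu\gg0$ already gives $-e_1(F/N)\ge0$, the non-positivity of the first coefficient advertised in the abstract. The real work is upgrading this to a manifestly non-negative, monotone expression valid for \emph{every} $\nu\ge0$, so that a single equality is felt all the way down to degree $\nu=0$, where Buchsbaum--Rim's criterion converts it into Cohen--Macaulayness.
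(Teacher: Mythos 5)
Your proposal reduces the whole theorem to a single ``Core Lemma'' --- that the defect $\delta(\nu)=\ell_A(S_{\nu+1}/N^{\nu+1})-e(F/N)\binom{\nu+d+r-1}{d+r-1}$ is non-decreasing in $\nu$ --- but this lemma is never proved; it is exactly where the content of the theorem lies, and your own text concedes this (``the hard part is precisely this sign analysis''). The identity $\delta(\nu)=\sum_{i\geq 1}(-1)^{i-1}\ell_A(H_i(\mathcal{C}^{(\nu)}_\bullet))$ expresses the defect as an alternating sum of homology lengths, and, as you note, such sums are in general neither non-negative nor monotone, nor does their vanishing force acyclicity; no mechanism is offered to rule out cancellation beyond the hope of a comparison with local cohomology ``with non-negative, non-decreasing coefficients,'' which is not formulated, let alone established. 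There are two further problems with the scaffolding. First, the base case $\delta(0)\geq 0$, with vanishing iff $A$ is Cohen--Macaulay \emph{for a single} parameter module, is not \cite[Corollary 4.5]{BR2}: Buchsbaum--Rim's criterion requires the equality $\ell_A(F/N)=e(F/N)$ to hold for \emph{every} parameter module. The single-module statement is precisely \cite[Theorem 1.3]{HH2}, i.e.\ the $\nu=0$ case of the very theorem being proved, so citing it is legitimate but should be done accurately. Second, the asserted Euler characteristic formula $\chi(\mathcal{C}^{(\nu)}_\bullet)=e(F/N)\binom{\nu+d+r-1}{d+r-1}$ for all $\nu$ is itself a nontrivial claim over a non-Cohen--Macaulay ring; only the case $\nu=0$ is available in \cite{BR2, K}, and proving it for all $\nu$ essentially requires the machinery you are trying to avoid. (Minor: your argument that $H_d(\mathcal{C}^{(\nu)}_\bullet)=0$ because it is a finite-length submodule of a free module needs $\depth A\geq 1$, which is not part of the hypothesis $d>0$.)

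For contrast, the paper sidesteps the sign problem entirely by a generic deformation. One passes to $B=A[X]_{(\fkm,X)}$ with $X$ a generic $r\times(d+r-1)$ matrix, sets $L=\Im(B^n\stackrel{X}{\to}G)$, $G=B^r$, and $\fkb=\Ker(B\to A)$, and observes that $S_{\nu+1}/N^{\nu+1}\cong (G_{\nu+1}/L^{\nu+1})\otimes_B B/\fkb$, where $G_{\nu+1}/L^{\nu+1}$ is a \emph{perfect} $B$-module of grade $d$ and $\fkb$ is generated by a system of parameters for it. The classical inequality ``length $\geq$ multiplicity'' for a system of parameters then gives $\ell_A(S_{\nu+1}/N^{\nu+1})\geq e(\fkb;G_{\nu+1}/L^{\nu+1})$, and the associativity formula together with an explicit local length computation at the minimal primes of $B/I_r(X)B$ identifies this multiplicity with $e(F/N)\binom{\nu+d+r-1}{d+r-1}$, via an Euler characteristic comparison of Koszul and Eagon--Northcott complexes. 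In the equality case, $G_{\nu+1}/L^{\nu+1}$ is Cohen--Macaulay; since it has finite projective dimension, the Auslander--Buchsbaum formula forces $B$, hence $A$, to be Cohen--Macaulay. All positivity in this argument comes for free from the classical multiplicity inequality over $B$, which is exactly what your approach lacks. To salvage your plan you would need to prove the Core Lemma, and there is no evident route to it that does not pass through something like this deformation.
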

This extends our previous result~\cite[Theorem 1.3]{HH2} where we assumed that $\nu=0$.
 
Concerning not only parameter modules but also their "powers", Theorem \ref{main} generalizes in two directions the classical result saying that the inequality $\ell_A(A/Q) \geq e(A/Q)$ holds true for any parameter ideal $Q$ in a local ring $A$ with equality for some parameter ideal if and only if $A$ is Cohen-Macaulay. 
Theorem \ref{main} seems to contain some new information even in the 
ideal case. Indeed, the equivalence of (i) and (ii) in (2) improves a recent observation that the ring $A$ is Cohen-Macaulay if there exists a parameter ideal $Q$ in $A$ such that 
$\ell_A(A/Q^{\nu+1})=e(A/Q) \binom{\nu+d}{d}$ for all $\nu \gg 0$ (see~\cite{GGHOPV, Ka}). Moreover, as a direct consequence of 
(1), we have the non-positivity of the first Buchsbaum-Rim coefficient of a parameter module. 

\begin{Corollary}\label{e1}
For any rank $r>0$, the inequality $$e_1(F/N) \leq 0$$ 
always holds true for every parameter module $N$ in $F=A^r$. 
\end{Corollary}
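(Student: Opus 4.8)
The plan is to read off the corollary directly from the uniform lower bound of Theorem~\ref{main}(1), comparing it with the Buchsbaum--Rim polynomial and examining a single coefficient.

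Recall from the introduction that, for all sufficiently large $\nu$, the Buchsbaum--Rim function $\ell_A(S_{\nu+1}/N^{\nu+1})$ agrees with the polynomial
$$P(\nu)=\sum_{i=0}^{d+r-1}(-1)^i e_i(F/N)\binom{\nu+d+r-1-i}{d+r-1-i},$$
and that the Buchsbaum--Rim multiplicity is $e(F/N)=e_0(F/N)$, the coefficient occurring in the $i=0$ term. By Theorem~\ref{main}(1) we have $\ell_A(S_{\nu+1}/N^{\nu+1})\geq e(F/N)\binom{\nu+d+r-1}{d+r-1}$ for every $\nu\geq 0$. Restricting to $\nu\gg 0$, where the left-hand side equals $P(\nu)$, and subtracting the $i=0$ term from both sides, I would obtain
$$Q(\nu):=\sum_{i=1}^{d+r-1}(-1)^i e_i(F/N)\binom{\nu+d+r-1-i}{d+r-1-i}\geq 0\qquad(\nu\gg 0).$$

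It then remains to identify the appropriate coefficient of the polynomial $Q(\nu)$. Each binomial $\binom{\nu+d+r-1-i}{d+r-1-i}$ is a polynomial of degree $d+r-1-i$ in $\nu$, so only the term $i=1$ contributes to the coefficient of $\nu^{d+r-2}$, which equals $-e_1(F/N)/(d+r-2)!$ (here $d+r-2\geq 0$ since $d>0$ and $r>0$; when $d+r=2$ the sum reduces to the constant $-e_1(F/N)$). Dividing the inequality $Q(\nu)\geq 0$ by $\nu^{d+r-2}$ and letting $\nu\to\infty$, the left-hand side tends to this coefficient, whence $-e_1(F/N)/(d+r-2)!\geq 0$ and therefore $e_1(F/N)\leq 0$.

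I do not expect any genuine obstacle here: the corollary is a purely formal consequence of the inequality in Theorem~\ref{main}(1) together with the elementary limit argument above, the only point worth noting being that the argument is insensitive to whether $e_1(F/N)$ vanishes, since the quotient $Q(\nu)/\nu^{d+r-2}$ is non-negative for large $\nu$ regardless.
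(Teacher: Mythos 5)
Your proposal is correct and is essentially the paper's own argument: the paper derives Corollary~\ref{e1} as a ``direct consequence'' of Theorem~\ref{main}(1), namely by comparing the inequality $\ell_A(S_{\nu+1}/N^{\nu+1})\geq e(F/N)\binom{\nu+d+r-1}{d+r-1}$ with the Buchsbaum--Rim polynomial for $\nu\gg 0$ and reading off the sign of the coefficient of $\nu^{d+r-2}$, exactly as you do. Your treatment of the degenerate case $d+r=2$ and the observation that the limit argument is insensitive to the vanishing of $e_1(F/N)$ are correct fillings-in of the details the paper leaves implicit.
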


Mandal and Verma have recently proved that $e_1(A/Q) \leq 0$ for any parameter ideal $Q$ in $A$ (see \cite{MV}, and also \cite{GGHOPV}). Corollary \ref{e1} can be viewed as the module version of this fact. However, our proof based on the inequality in Theorem \ref{main} (1) is completely different from theirs and is considerably more simple. 

The proof of Theorem \ref{main} will be completed in section 3. It utilizes the fact that the Buchsbaum-Rim multiplicity of a parameter module 
can be determined as the Euler-Poincar\'e characteristic of the corresponding Eagon-Northcott complex. The next section is of preliminary character. In section 3, we will obtain Theorem \ref{main} as a corollary of a more general result (Theorem \ref{main2}).

\section{Preliminaries}

Let $(A, \fkm)$ be a Noetherian local ring of dimension $d$. Let $F=A^r$ be a free module of rank $r>0$. Let $S=\S_A(F)$ be the symmetric algebra of $F$. Let $N$ be a parameter module in $F$, that is, $N$ is a submodule of $F$ satisfying the conditions: (i) $\ell_A(F/N) < \infty$, (ii) $N \subseteq \fkm F$, and (iii) $\mu_A(N)=d+r-1$. We put $n=d+r-1$. Let $N^{\nu}$ be the homogeneous component of degree $\nu$ of the graded subalgebra $\Rees(N)=\Im(\S_A(N) \to S)$ of $S$. Let $\tilde{N}=(c_{ij})$ be the matrix associated to a minimal free presentation 
$$A^n \stackrel{\tilde{N}}{\to} F \to F/N \to 0$$ of $F/N$. Let $I(N)$ be the $0$-th Fitting ideal of $F/N$, which is the ideal generated by the maximal minors of $\tilde{N}$. 
Let $X=(X_{ij})$ be a generic matrix of the same size $r \times n$. We denote by $I_s(X)$ the ideal in the polynomial ring $A[X]=A[X_{ij} \mid 1 \leq i \leq r, 1 \leq j \leq n]$ 
generated by the $s$-minors of $X$. Let $B=A[X]_{(\fkm, X)}$ be the ring localized at the graded maximal ideal $(\fkm, X)$ of $A[X]$. The substitution map $A[X] \to A$ where $X_{ij} \mapsto c_{ij}$ now induces a map $\varphi: B \to A$. We consider the ring $A$ as a $B$-algebra via the map $\varphi$. Let $$\fkb=\Ker \varphi=(X_{ij}-c_{ij} \mid 1 \leq i \leq r, 1 \leq j \leq n)B.$$ Set $G=B^r$, and let $L$ denote the submodule $\Im (B^n \stackrel{X}{\to} G)$ of $G$. Let $G_{\nu}$ and $L^{\nu}$ be the homogeneous components of degree $\nu$ of the graded algebras $\S_B(G)$ and $\Rees(L)$, respectively. 
In the sequel we will utilize the exact sequences
\begin{equation}\label{ex}
0 \to L^{\nu}G_{t}/L^{\nu+1}G_{t-1} \to G_{\nu+t}/L^{\nu+1}G_{t-1} \to G_{\nu+t}/L^{\nu}G_{t} \to 0 \tag{$\ast$}
\end{equation}
where $\nu, t \geq 0$. Here $LG_{-1}=I_r(X)B$ and $NS_{-1}=I(N)$. 

We recall the following fact from \cite{HH1}:

\begin{Proposition}\label{rees}
For any integer $t \geq 0$, the $B$-module $L^{\nu}G_t/L^{\nu+1}G_{t-1}$ is isomorphic to the direct sum of $\binom{\nu+n-1}{n-1}$ copies of $G_t/LG_{t-1}$ for all $\nu \geq 0$. That is, we have 
for all $\nu, t \geq 0$ an isomorphism of $B$-modules $$L^{\nu}G_t/L^{\nu+1}G_{t-1} \cong (G_t/LG_{t-1})^{\binom{\nu+n-1}{n-1}}.$$
 
\end{Proposition}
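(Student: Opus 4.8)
The plan is to exhibit an explicit natural surjection and then reduce the whole statement to its injectivity. First I would record the identification $G_t/LG_{t-1}\cong\mathcal S_t(G/L)$, coming from the right exactness of symmetric powers applied to $B^n\xrightarrow{X}G\to G/L\to 0$; thus both sides of the asserted isomorphism are modules supported on $V(I_r(X))$. Writing $\xi_1,\dots,\xi_n\in L=L^1$ for the images of the standard basis of $B^n$ under $X$, the degree-$\nu$ component $L^\nu$ is spanned over $B$ by the monomials $\xi^\alpha$ with $|\alpha|=\nu$, of which there are $\binom{\nu+n-1}{n-1}$. Sending the class $\bar g$ in the $\alpha$-th summand to the class of $\xi^\alpha g$ gives a well-defined $B$-homomorphism
\[
\Psi:\ \bigl(G_t/LG_{t-1}\bigr)^{\binom{\nu+n-1}{n-1}}\longrightarrow L^\nu G_t/L^{\nu+1}G_{t-1},
\]
(well-definedness because $\xi^\alpha\,LG_{t-1}\subseteq L^{\nu+1}G_{t-1}$), and $\Psi$ is surjective since the $\xi^\alpha$ generate $L^\nu$. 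Everything then comes down to proving $\Psi$ injective, i.e. that the only relations among the $\xi^\alpha g$ modulo $L^{\nu+1}G_{t-1}$ are the obvious ones coming from $g\in LG_{t-1}$.

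The injectivity is where the genericity of $X$ is indispensable, and it is also where a naive argument fails: since $n>r$ as soon as $d>1$, the forms $\xi_1,\dots,\xi_n$ are \emph{not} a regular sequence in $\mathcal S_B(G)$ (the ideal they generate has height at most $r<n$), so one cannot simply invoke that $\mathrm{gr}$ of a complete intersection is a polynomial ring. Instead I would exploit that $X$ is a matrix of indeterminates, up to the harmless localization defining $B$, so that the determinantal ideals attain their maximal value $\grade I_s(X)=(r-s+1)(n-s+1)$. By the Buchsbaum--Eisenbud acyclicity criterion this makes the Eagon--Northcott complex of $X$ (resolving $B/I_r(X)=G_0/LG_{-1}$) and, more generally, the Buchsbaum--Rim complexes resolving the symmetric powers $\mathcal S_t(G/L)=G_t/LG_{t-1}$ acyclic. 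These acyclic complexes are exactly the tool this paper advertises, and they give firm homological control over the modules $L^\nu G_t$, $L^{\nu+1}G_{t-1}$ and their quotients.

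With this in hand I would run an induction using the short exact sequences $(\ast)$, which interlock the quotients $L^\nu G_t/L^{\nu+1}G_{t-1}$ with the ``full'' Buchsbaum--Rim modules $G_{\nu+t}/L^{\nu+1}G_{t-1}$ and $G_{\nu+t}/L^\nu G_t$ sitting in the middle and on the right. The acyclicity of the relevant complexes forces the connecting maps to degenerate, yielding the vanishing of the obstruction to splitting and hence the injectivity of $\Psi$; tracking the internal $T$-grading (each $\xi_j$ has degree one) returns precisely $\binom{\nu+n-1}{n-1}$ copies of $G_t/LG_{t-1}$. The main obstacle throughout is this injectivity: one must rule out any relation among the $\xi^\alpha$ beyond those forced by $LG_{t-1}$, and this non-degeneracy is genuinely a consequence of the maximal grade of the minors of the generic matrix---without genericity the analogue of the Proposition is false, as the failure of $\xi_1,\dots,\xi_n$ to be a regular sequence already signals.
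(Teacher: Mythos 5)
Your setup is sound and worth keeping: the identification $G_t/LG_{t-1}\cong \S_t(G/L)$, the well-defined $B$-linear surjection $\Psi$ from $\binom{\nu+n-1}{n-1}$ copies of $G_t/LG_{t-1}$ onto $L^{\nu}G_t/L^{\nu+1}G_{t-1}$, the reduction of the Proposition to the injectivity of $\Psi$, and the warning that $\xi_1,\dots,\xi_n$ is not a regular sequence are all correct. (Note that the paper offers no argument of its own here; it simply cites \cite[Proposition 3.1]{HH1}, so a self-contained proof would be a genuine contribution.) The problem is that your treatment of injectivity --- which you yourself identify as the entire content of the statement --- is not a proof. The sequences $(\ast)$ are short exact sequences of modules; they have no connecting maps, and ``acyclicity forces the connecting maps to degenerate, yielding the vanishing of the obstruction to splitting'' does not describe any identifiable mathematical step. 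Splitting of $(\ast)$ is not even the issue: what must be shown is that your particular map $\Psi$ has zero kernel. Worse, the induction you sketch runs the paper's logic backwards and is circular: to extract anything from $(\ast)$ you would need independent control of the middle terms $G_{\nu+t}/L^{\nu+1}G_{t-1}$ (and of the right-hand terms $G_{\nu+t}/L^{\nu}G_t$, which are middle terms of other sequences in the family), but the acyclicity results you invoke (Eagon--Northcott, Buchsbaum--Rim, Buchsbaum--Eisenbud) only cover $B/I_r(X)B$, the symmetric powers $G_t/LG_{t-1}$, and the quotients $G_{\nu+1}/L^{\nu+1}$. The mixed quotients for $t\geq 2$ are precisely the modules whose structure this paper (Lemma 2.3, Proposition 2.4) and \cite{HH1} deduce \emph{from} the present Proposition; and in any case abstract perfection of a middle term cannot, by itself, identify its submodule $L^{\nu}G_t/L^{\nu+1}G_{t-1}$ with a specific direct sum of copies of $G_t/LG_{t-1}$.

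There is, however, a genuine way to finish along your lines, using only inputs you already invoke. The source $P:=(G_t/LG_{t-1})^{\binom{\nu+n-1}{n-1}}$ of $\Psi$ is a perfect $B$-module of grade $d$ (this is where the acyclicity of the generic complexes really enters), hence grade unmixed: every $\fkp\in\Ass_B(P)$ has $\grade_B\fkp=d$ and contains $I_r(X)B$. Since $\grade_B I_{r-1}(X)B=2(d+1)>d$, such a $\fkp$ cannot contain $I_{r-1}(X)B$, so over $B_{\fkp}$ the matrix $X$ can be brought to the block form with identity $E_{r-1}$ and a row $(a_1,\dots,a_d)$ generating $I_r(X)B_{\fkp}$; this ideal has grade $d$ and $d$ generators, so $a_1,\dots,a_d$ is a regular sequence on $B_{\fkp}$. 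An explicit monomial computation in this normal form (the very computation carried out in the paper's Proposition 2.4, now using that the associated graded ring of $(a_1,\dots,a_d)B_{\fkp}$ is a polynomial ring, which replaces the failed global regular-sequence argument) shows that $\Psi_{\fkp}$ is bijective for every such $\fkp$. Since $\Ker\Psi\subseteq P$, we get $\Ass_B(\Ker\Psi)\subseteq\Ass_B(P)$ while $(\Ker\Psi)_{\fkp}=0$ for all $\fkp\in\Ass_B(P)$; hence $\Ker\Psi=0$. Without this (or some other concrete) argument, your proposal establishes only the surjection, not the isomorphism.
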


\begin{proof}
See \cite[Proposition 3.1]{HH1}. 
\end{proof}

\begin{Lemma}
For any integers $\nu, t \geq 0$, we have the following:
\begin{enumerate}
\item[$(1)$] $\left(G_{\nu+t}/L^{\nu+1}G_{t-1} \right) \otimes_B (B/\fkb) \cong S_{\nu+t}/N^{\nu+1}S_{t-1}$;
\item[$(2)$] $\Supp_B(G_{\nu+t}/L^{\nu+1}G_{t-1})=\Supp_B(B/I_r(X)B)$; 
\item[$(3)$] The ideal $\fkb$ is generated by a system of parameters of the module $G_{\nu+t}/L^{\nu+1}G_{t-1}$. 
\end{enumerate}
\end{Lemma}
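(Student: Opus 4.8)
The plan is to treat the three parts in order, since part $(3)$ will rest on parts $(1)$ and $(2)$. The unifying principle is that passing from $B$ to $A=B/\fkb$ along $\varphi$ specializes the generic data $(G,L)$ to the concrete data $(F,N)$: the substitution $X_{ij}\mapsto c_{ij}$ carries $\S_B(G)$ onto $\S_A(F)=S$ and $L$ onto $N$, hence $L^{\nu}$ onto $N^{\nu}$. For $(1)$ I would use that the symmetric algebra commutes with base change, $\S_B(G)\otimes_B A\cong \S_A(G\otimes_B A)=\S_A(F)=S$, compatibly with the grading, so that $G_{\nu+t}\otimes_B A\cong S_{\nu+t}$. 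Applying the right-exact functor $-\otimes_B A$ to the surjection $G_{\nu+t}\twoheadrightarrow G_{\nu+t}/L^{\nu+1}G_{t-1}$, the image of $(L^{\nu+1}G_{t-1})\otimes_B A$ in $S_{\nu+t}$ is precisely the image of $L^{\nu+1}G_{t-1}$ under the algebra map $\S_B(G)\to S$, namely $N^{\nu+1}S_{t-1}$; this gives the stated isomorphism (the boundary case $t=0$ being accommodated by the conventions $LG_{-1}=I_r(X)B$ and $NS_{-1}=I(N)$, the latter being the image of the former under $\varphi$).

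For $(2)$ I would first record that $\Supp_B(G_t/LG_{t-1})=\Supp_B(B/I_r(X)B)=V(I_r(X)B)$ for every $t\geq 0$. For $t=0$ this is the convention $G_0/LG_{-1}=B/I_r(X)B$; for $t\geq 1$ one has $G_t/LG_{t-1}\cong \S_B(G/L)_t$, whose support equals $\Supp_B(G/L)=V(\Fitt_0(G/L))=V(I_r(X)B)$, because the $t$-th symmetric power of a module is nonzero exactly where the module is nonzero when $t\geq 1$. I would then induct on $\nu$ using the exact sequence $(\ast)$: by Proposition \ref{rees} its submodule is $(G_t/LG_{t-1})^{\binom{\nu+n-1}{n-1}}$, with support $V(I_r(X)B)$, while its quotient $G_{\nu+t}/L^{\nu}G_t$ is the same type of module with $(\nu,t)$ replaced by $(\nu-1,t+1)$ and so has support $V(I_r(X)B)$ by the inductive hypothesis. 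Since support is additive on short exact sequences, part $(2)$ follows.

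For $(3)$, set $M=G_{\nu+t}/L^{\nu+1}G_{t-1}$. By part $(2)$, $\dim_B M=\dim(B/I_r(X)B)$, and the crux is that this number equals $rn$. By part $(1)$, $M/\fkb M\cong M\otimes_B A\cong S_{\nu+t}/N^{\nu+1}S_{t-1}$, which has finite length over $A$, and hence over $B$: since $F/N$ has finite length, every localization of this module away from $\fkm$ vanishes. As $\fkb$ is generated by the $rn$ elements $X_{ij}-c_{ij}$, this alone yields $\dim_B M\leq rn$. The remaining point is the reverse inequality $\dim(B/I_r(X)B)\geq rn$, which I would obtain from the fact that the generic determinantal ring $A[X]/I_r(X)$ is a free, hence flat, $A$-module by standard monomial theory; consequently $B/I_r(X)B$ is flat and local over $A$, and the dimension formula for a flat local homomorphism gives $\dim(B/I_r(X)B)=\dim A+\dim\bigl((A/\fkm)[X]/I_r(X)\bigr)_{(X)}=d+(rn-d)=rn$, using that the ideal of maximal minors of a generic $r\times n$ matrix over a field has height $n-r+1=d$. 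With $\dim_B M=rn$ equal to the number of generators of $\fkb$ and $\ell_B(M/\fkb M)<\infty$, these generators form a system of parameters for $M$.

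The main obstacle is exactly this lower bound $\dim(B/I_r(X)B)\geq rn$ over an arbitrary base ring $A$: the upper bound is formal, but the equality relies on the flatness of the generic determinantal ring over $A$—equivalently, on $I_r(X)$ attaining the generic grade $n-r+1=d$ over any Noetherian ring—which is the one nontrivial external input the argument requires.
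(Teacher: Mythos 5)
Your proposal is correct and follows essentially the same route as the paper: part $(1)$ by base change, part $(2)$ by identifying $\Supp_B(G/L)$ with $V(I_r(X)B)$ via $\Fitt_0(G/L)=I_r(X)B$ and then propagating to all $\nu,t$ using Proposition \ref{rees} and the sequence (\ref{ex}), and part $(3)$ by combining $\dim B/I_r(X)B=rn$ with the finite length of the fiber and the fact that $\fkb$ has $rn$ generators. The only differences are cosmetic: you organize $(2)$ as a single induction on $\nu$ where the paper treats $t\geq 1$ by a direct localization argument and $t=0$ separately, and you prove the dimension formula via flatness of the generic determinantal ring over $A$ (standard monomial theory) where the paper simply cites \cite[(5.12) Corollary]{BV}.
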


\begin{proof} The first assertion is easy to see. Let us then verify the second one. It is well-known that $\sqrt{\Ann_B (G/L)} = \sqrt{\Fitt_0 (G/L)}$ (see \cite[(16.2) Proposition]{BV}). Since $\Fitt_0 (G/L) = I_r(X)B$, we have $\Supp_B (G/L) = \Supp_B (B/I_r(X)B)$. An easy localization argument gives $$\Supp_B(G_{\nu+t}/L^{\nu+1}G_{t-1})=\Supp_B (G/L)$$ for all $\nu \geq 0 $ and $t \geq 1$. It therefore remains to show that $$\Supp_B (G_{\nu}/I_r(X)L^{\nu}) = \Supp_B (B/I_r(X)B),$$ but this is easily checked by using the exact sequence (\ref{ex}) in the case $t=0$ combined with Proposition \ref{rees}. Thus the assertion (2) follows. 
In order to prove the third assertion, recall first that $\dim B/I_r(X)B=d+(n+1)(r-1)=rn$ (see \cite[(5.12) Corollary]{BV}). The assertion (3) then follows from (1), (2) and the fact that $\fkb$ is generated by $rn$ elements. 
\end{proof}

\begin{Lemma}\label{perfect}
For any integer $\nu \geq 0$, we have 
\begin{enumerate}
\item[$(1)$] $G_{\nu}/I_r(X)L^{\nu}$ and $G_{\nu+1}/L^{\nu+1}$ are perfect $B$-modules of grade $d$;
\item[$(2)$] $G_{\nu+t}/L^{\nu+1}G_{t-1}$ has finite projective dimension for all $t \geq 0$. 
\end{enumerate} 
\end{Lemma}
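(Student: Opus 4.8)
The plan is to derive part $(2)$ formally from part $(1)$ and to put all the real work into part $(1)$. Throughout I abbreviate $M_{\nu,t}=G_{\nu+t}/L^{\nu+1}G_{t-1}$, so that the two modules in $(1)$ are $M_{\nu,0}=G_{\nu}/I_r(X)L^{\nu}$ and $M_{\nu,1}=G_{\nu+1}/L^{\nu+1}$, while part $(2)$ is exactly the assertion $\projdim_B M_{\nu,t}<\infty$ for all $\nu,t\ge 0$.

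For part $(1)$ I would first fix the grade. By the preceding Lemma~$(2)$ we have $\sqrt{\Ann_B M_{\nu,i}}=\sqrt{I_r(X)B}$ for $i=0,1$, and since the grade of an ideal depends only on its radical, $\grade M_{\nu,i}=\grade I_r(X)B$. Now the maximal minors of the generic matrix $X$ generate a perfect ideal whose grade equals its height over \emph{any} Noetherian base ring (generic perfection of determinantal ideals, cf.\ \cite{BV}); using $\dim B/I_r(X)B=rn$ together with $\dim B=d+rn$, that height is $d=n-r+1$, so $\grade M_{\nu,i}=d$. Next I would exhibit finite free resolutions: both $M_{\nu,1}=\Coker\bigl(\Sym^{\nu+1}(X)\colon \Sym^{\nu+1}(B^n)\to G_{\nu+1}\bigr)$ and $M_{\nu,0}$ are cokernels of maps of free $B$-modules built from $X$, and are resolved by the Buchsbaum--Rim/Eagon--Northcott family of complexes attached to $X$. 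Since $\grade I_r(X)B=n-r+1$, these complexes are acyclic by the Buchsbaum--Eisenbud criterion, and as they have length $n-r+1=d$ we get $\projdim_B M_{\nu,i}\le d$. Because $\grade\le\projdim$ always holds for a module of finite projective dimension, equality is forced, i.e.\ $M_{\nu,0}$ and $M_{\nu,1}$ are perfect of grade $d$.

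The hard part is precisely this grade computation. As $A$, and hence $B$, is not assumed Cohen--Macaulay, one cannot read off $\grade=\height$ from the dimension count alone; it is essential that $I_r(X)$ is a \emph{generic} determinantal ideal, so that its perfection, and with it the equality of grade and height, persists under base change from the universal coefficient ring to $B$. This is the one place where the genericity of $X$ is used in a crucial way.

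For part $(2)$ I would reindex the exact sequence $(\ref{ex})$ by $\nu\mapsto\nu+1$ and $t\mapsto t-1$ and rewrite its left-hand term using Proposition \ref{rees}, obtaining for every $t\ge 1$ a short exact sequence
$$0\to (M_{0,t-1})^{\binom{\nu+n}{n-1}}\to M_{\nu+1,t-1}\to M_{\nu,t}\to 0 .$$
Then I would induct on $t$. The base case $t=0$ is part $(1)$, since each $M_{\nu,0}$ is perfect and hence of finite projective dimension. For $t\ge 1$ the two left-hand modules $M_{0,t-1}$ and $M_{\nu+1,t-1}$ have finite projective dimension by the induction hypothesis (both sit at level $t-1$), and finite projective dimension is inherited by the cokernel term of a short exact sequence; hence $\projdim_B M_{\nu,t}<\infty$. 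This gives $(2)$ and completes the proof.
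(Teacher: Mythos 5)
Your part (2) is correct and is exactly the induction the paper intends: the re-indexed sequence (\ref{ex}), Proposition \ref{rees}, and the fact that finite projective dimension passes to the cokernel of a short exact sequence whose other two terms have it. The gaps are in part (1), which is where the real content of the lemma lies. The module $G_{\nu+1}/L^{\nu+1}=\Coker(\Sym_{\nu+1}X)$ is \emph{not} resolved by the Eagon--Northcott/Buchsbaum--Rim family attached to $X$: that family (the complexes of \cite[Appendix A2]{E}) resolves $B/I_r(X)B$ and the symmetric powers of the cokernel, $G_t/LG_{t-1}=\Sym_t(G/L)$, which are genuinely different modules from $G_{\nu+1}/L^{\nu+1}$ (already for $r=1$ the first family gives $B/I$, the second $B/I^{\nu+1}$). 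The distinction matters in this very paper: $G_t/LG_{t-1}$ fails to be perfect once $t\ge d+1$ (see the concluding Remarks), so a blanket appeal to ``complexes of length $d$'' in this family proves too much if the two kinds of quotients are conflated. The fact you need for $G_{\nu+1}/L^{\nu+1}$ is a different, nontrivial theorem --- the generic resolutions of Buchsbaum and Eisenbud --- and that is precisely what the paper cites: \cite[Corollary 3.2]{BE} (see also \cite[Proposition 3.3]{KN}). For this module your argument is thus a repairable mis-citation, but it cannot be carried by the complexes you name.

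For $G_{\nu}/I_r(X)L^{\nu}$ the gap is more serious: this module is not of the form $B/I_r(X)B$, $\Coker(\Sym_k X)$, or $\Sym_t(G/L)$, no standard complex in these families resolves it, and you construct none. The paper's argument here is the idea your proposal is missing: take (\ref{ex}) with $t=0$,
$$0 \to L^{\nu}/I_r(X)L^{\nu} \to G_{\nu}/I_r(X)L^{\nu} \to G_{\nu}/L^{\nu} \to 0,$$
use Proposition \ref{rees} to identify the left term with a direct sum of copies of $B/I_r(X)B$, which is perfect of grade $d$ by \cite{BV}, note that the right term is perfect of grade $d$ by the Buchsbaum--Eisenbud result, and conclude because an extension of two perfect modules of the same grade is again perfect of that grade (the grade of the middle term is $d$ by your support argument, while its projective dimension is at most the maximum of those of the outer terms, hence $\le d$). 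Finally, a smaller point: your height computation runs backwards. The dimension count only gives $\height I_r(X)B\le d$, and $B$ need not be catenary since $A$ is an arbitrary Noetherian local ring; what \cite{BV} provides via generic perfection is $\grade I_r(X)B=n-r+1=d$ directly, from which $\height I_r(X)B=d$ follows, not conversely. With that fact in hand, your reduction $\grade_B M=\grade \sqrt{\Ann_B M}=\grade I_r(X)B$ for the two modules in question is fine.
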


\begin{proof}
The claim concerning $G_{\nu+1}/L^{\nu+1}$ in (1) is already known by \cite[Corollary 3.2]{BE} (see also \cite[Proposition 3.3]{KN}). Consider the exact sequence (\ref{ex}) with $t=0$. Since $B/I_r(X)B$ is a perfect $B$-module of grade $d$ (see \cite[(2.8) Corollary]{BV}), Proposition \ref{rees} implies that so is $L_{\nu}/I_r(X)L^{\nu}$. It thus follows that $G_{\nu}/I_r(X)L^{\nu}$ is a perfect $B$-module of grade $d$ for all $\nu \geq 0$. This proves (1). We can then prove $(2)$ by induction on $t$ using the exact sequence (\ref{ex}) and Proposition \ref{rees}. 
\end{proof}

\begin{Proposition}
For any $\fkp \in \Min_B(B/I_r(X)B)$, the equality 
$$\ell_{B_{\fkp}} \left( (G_{\nu+t}/L^{\nu+1}G_{t-1})_{\fkp} \right) = \ell_{B_{\fkp}} \left( (B/I_r(X)B)_{\fkp} \right) \binom{\nu+d+r-1}{d+r-1 } $$
holds true for all integers $\nu \geq 0$ and $t \geq 0$. 
\end{Proposition}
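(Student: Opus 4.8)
The plan is to prove the localized length formula by localizing everything at a minimal prime $\fkp$ of $B/I_r(X)B$ and reducing to a computation of lengths of the graded pieces $(G_t/LG_{t-1})_\fkp$. The key structural input is Proposition~\ref{rees}, which says that $L^\nu G_t/L^{\nu+1}G_{t-1}$ is a direct sum of $\binom{\nu+n-1}{n-1}$ copies of $G_t/LG_{t-1}$, together with the exact sequences~(\ref{ex}). First I would fix $\fkp \in \Min_B(B/I_r(X)B)$ and localize the exact sequence~(\ref{ex}) at $\fkp$; since localization is exact this gives short exact sequences of $B_\fkp$-modules, and by Lemma~1.3(2) every module appearing is supported at $\fkp$ only after localization in a way that makes the relevant lengths finite. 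The point is that $(G_{\nu+t}/L^{\nu+1}G_{t-1})_\fkp$ has finite length over $B_\fkp$ precisely because $\fkp$ is minimal over $I_r(X)B = \Fitt_0(G/L)$, so the support condition in Lemma~1.3(2) forces $\dim B_\fkp = 0$ on the relevant quotients.

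The main computation is an induction. Additivity of length on the localized sequence~(\ref{ex}) yields
\begin{equation*}
\ell_{B_\fkp}\!\left((G_{\nu+t}/L^{\nu+1}G_{t-1})_\fkp\right) = \ell_{B_\fkp}\!\left((L^\nu G_t/L^{\nu+1}G_{t-1})_\fkp\right) + \ell_{B_\fkp}\!\left((G_{\nu+t}/L^\nu G_t)_\fkp\right),
\end{equation*}
and by Proposition~\ref{rees} the first term on the right equals $\binom{\nu+n-1}{n-1}\,\ell_{B_\fkp}\!\left((G_t/LG_{t-1})_\fkp\right)$. I would first pin down the base quantity $\ell_{B_\fkp}\!\left((G_t/LG_{t-1})_\fkp\right)$, showing it is independent of $t \geq 0$ and equal to $\ell_{B_\fkp}((B/I_r(X)B)_\fkp)$; this again follows from~(\ref{ex}) with $\nu$ replaced appropriately, or directly from the support and perfectness facts, using that $LG_{-1} = I_r(X)B$ so that the $t=0$ case reads $G_0/LG_{-1} = B/I_r(X)B$. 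With this in hand, summing the telescoping recursion over $t$ (or over $\nu$) reduces the claim to the combinatorial identity $\sum_{j=0}^{\nu}\binom{j+n-1}{n-1} = \binom{\nu+n}{n} = \binom{\nu+d+r-1}{d+r-1}$, recalling $n = d+r-1$.

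Concretely, I would induct on $\nu$ with $t$ fixed. For $\nu = 0$ the quotient $G_t/L G_{t-1}$ has the stated length by the base computation and $\binom{d+r-1}{d+r-1}=1$. For the inductive step I use the recursion above with the roles arranged so that $G_{\nu+t}/L^\nu G_t$ feeds the induction hypothesis; the extra summand contributes exactly $\binom{\nu+n-1}{n-1}$ copies of the base length, and Pascal's rule $\binom{\nu+n-1}{n-1} + \binom{\nu+n-1}{n} = \binom{\nu+n}{n}$ closes the step. The hard part will be bookkeeping the index shift between the $G_{\nu+t}/L^{\nu}G_t$ term and the module to which the induction hypothesis applies: one must be careful that the sequence~(\ref{ex}) with parameters $(\nu-1, t+1)$, $(\nu, t)$, etc., glues correctly, and that the base quantity $\ell_{B_\fkp}((G_t/LG_{t-1})_\fkp)$ is genuinely $t$-independent rather than merely equal at $t=0$. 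Verifying that independence—most cleanly by showing the associated graded pieces all localize to the same length as $(B/I_r(X)B)_\fkp$ via Proposition~\ref{rees}—is the only real subtlety; the remaining steps are the exactness of localization and the binomial identity.
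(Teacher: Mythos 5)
There is a genuine gap, and it sits exactly where you yourself locate ``the only real subtlety'': the base case $\nu=0$, i.e.\ the claim that $\ell_{B_\fkp}\left((G_t/LG_{t-1})_\fkp\right)=\ell_{B_\fkp}\left((B/I_r(X)B)_\fkp\right)$ for all $t\geq 0$. None of the tools you invoke can deliver this. The sequences (\ref{ex}) link the parameter pair $(\nu,t)$ to $(\nu-1,t+1)$ and degenerate when $\nu=0$ (the submodule equals the middle term and the quotient is zero), so every chain of such sequences terminates precisely at the modules $G_m/LG_{m-1}=\S_m(G/L)$ whose lengths you are trying to compute; Proposition \ref{rees} only relates different $\nu$'s at the \emph{same} $t$, never different $t$'s, so citing it for the $t$-independence is circular; and equality of supports (part (2) of the Lemma in Section 2) says nothing about equality of lengths. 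Perfectness does not help either: by the paper's closing Remarks, $\pd_B(G_t/LG_{t-1})\geq d+1$ when $t\geq d+1$, so these modules are not even perfect in the range where you would need the fact. The underlying obstruction is that for a finite-length module $M$ the length $\ell(\S_t(M))$ is \emph{not} determined by $\ell(M)$: if $(G/L)_\fkp$ were a two-dimensional vector space over the residue field, then $\ell_{B_\fkp}(\S_t((G/L)_\fkp))=t+1$ would grow with $t$. So the $t$-independence is a structural fact requiring proof, not bookkeeping.

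What is missing is the local structure argument that occupies most of the paper's proof: for $\fkp\in\Min_B(B/I_r(X)B)$ one has $\grade_B\fkp=d$ (perfect modules are grade unmixed), while $\grade_B I_{r-1}(X)B=2(d+1)>d$, so $I_{r-1}(X)B\nsubseteq\fkp$; hence over $B_\fkp$ the matrix $X$ can be brought by row and column operations to a form with an identity block $E_{r-1}$ and last row $(0,\dots,0,a_1,\dots,a_d)$. This makes $(G/L)_\fkp\cong B_\fkp/(a_1,\dots,a_d)B_\fkp$ \emph{cyclic}, whence $\S_t((G/L)_\fkp)\cong B_\fkp/I_r(X)B_\fkp$ for all $t\geq 1$ --- exactly your base case --- and it also shows that $B_\fkp$ is Cohen--Macaulay with $a_1,\dots,a_d$ a regular sequence. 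Once this is in hand, your induction on $\nu$ (additivity of length on the localized sequence (\ref{ex}), Proposition \ref{rees}, Pascal's rule) does close correctly; it is essentially a transposed version of the paper's own argument, which instead computes the case $t=1$ for all $\nu$ directly from the cyclic structure and then inducts on $t$. So your skeleton is sound, but without the matrix reduction at $\fkp$ the proof does not go through.
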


\begin{proof}
Take $\fkp \in \Min_B(B/I_r(X)B)$ and fix an integer $\nu \geq 0$. We put $I_{\fkp}=I_r(X)B_{\fkp}$. 
We start with the case $t=1$. 
By Lemma \ref{perfect} (1), $\grade_B \fkp =d$, because perfect modules are grade unmixed. It now follows that $I_{r-1}(X)B \nsubseteq \fkp$. Indeed, if $I_{r-1}(X)B \subseteq 
\fkp$ , then \begin{eqnarray*}
d &=& \grade_B \fkp \\
&\geq& \grade_B I_{r-1}(X)B \\
&=& (n-(r-1)+1)(n-(n-1)+1) \\
&=& 2(d+1), 
\end{eqnarray*}
which is a contradiction. Consider now the following free presentation $$B_{\fkp}^n \stackrel{X}{\to} G_{\fkp} \to G_{\fkp}/L_{\fkp} \to 0$$ of $G_{\fkp}/L_{\fkp}$. Since $I_{r-1}(X)B \nsubseteq \fkp$, we may assume that after elementary row and column transformations over $B_{\fkp}$ the matrix $X$ has the form 
$$\left(
\begin{array}{ccc|ccc}
 & & & & &  \\ 
 & E_{r-1} & & & O &  \\ 
 & & & & &  \\ \hline
0 & \cdots & 0 & a_1 & \cdots & a_d \\ 
\end{array}
\right),  
$$
where $E_{r-1}$ is the identity matrix of size $r-1$.  
Let us fix a free basis $\{ t_1, \dots , t_r \}$ for $G_{\fkp}$. Then we have $I_{\fkp}=(a_1, \dots , a_d)B_{\fkp}$ and $\Rees(L_{\fkp}) \cong B_{\fkp}[t_1, \dots , t_{r-1}, I_{\fkp}t_r]$. Therefore, we get the following isomorphisms: 
\begin{eqnarray*}
\left( G_{\nu+1}/L^{\nu+1} \right)_{\fkp} &\cong& (G_{\fkp})_{\nu+1}/(L_{\fkp})^{\nu+1} \\
&\cong&  B_{\fkp}[t_1, \dots , t_{r-1},  t_r]_{\nu+1}/B_{\fkp}[t_1, \dots , t_{r-1}, I_{\fkp}t_r]_{\nu+1} \\
&\cong& \bigoplus_{\substack{i_1, \dots , i_r \geq 0, \\ i_1+ \cdots +i_r=\nu+1}} \left( B_{\fkp}/I_{\fkp}^{i_r} \right) t_1^{i_1} \cdots t_r^{i_r} \\
&=& \bigoplus_{i=1}^{\nu+1} \biggl( \bigoplus_{\substack{i_1, \dots , i_{r-1} \geq 0 \\ i_1+ \cdots +i_{r-1}=\nu+1-i}} \left( B_{\fkp}/I_{\fkp}^{i} \right) t_1^{i_1} \cdots t_{r-1}^{i_{r-1}} \biggr) t_r^i. 
\end{eqnarray*}
Notice that the ring $B_{\fkp}$ is Cohen-Macaulay and the system of generators $\{ a_1, \dots , a_d \}$ of $I_{\fkp}$ forms a regular sequence on $B_{\fkp}$. Hence we can compute the length 
in question as follows: 
\begin{eqnarray*}
\ell_{B_{\fkp}} \left( (G_{\nu+1}/L^{\nu+1})_{\fkp} \right) &=& \sum_{i=1}^{\nu+1} \rank_{B_{\fkp}}(B_{\fkp}[t_1, \dots , t_{r-1}]_{\nu+1-i}) \cdot \ell_{B_{\fkp}}(B_{\fkp}/I_{\fkp}^i) \\
 &=& \ell_{B_{\fkp}}(B_{\fkp}/I_{\fkp}) \sum_{i=1}^{\nu+1} \binom{\nu-i+r-1}{r-2} \binom{i+d-1}{d} \\
 &=& \ell_{B_{\fkp}}(B_{\fkp}/I_{\fkp}) \sum_{i=0}^{\nu} \binom{\nu-i+r-2}{r-2} \binom{i+d}{d} \\
 &=& \ell_{B_{\fkp}}(B_{\fkp}/I_{\fkp}) \binom{\nu+d+r-1}{d+r-1}. 
\end{eqnarray*} 

We will next prove the case $t=0$. Consider the exact sequence (\ref{ex}) localized at $\fkp$. 
By Proposition \ref{rees} and the case $t=1$, we get 
\begin{eqnarray*}
\ell_{B_{\fkp}} \left( (G_{\nu}/I_r(X)L^{\nu})_{\fkp} \right) &=& \ell_{B_{\fkp}} \left( (G_{\nu}/L^{\nu})_{\fkp} \right) + \ell_{B_{\fkp}} \left( (L^{\nu}/I_r(X)L^{\nu})_{\fkp} \right) \\
 &=& \ell_{B_{\fkp}} (B_{\fkp}/I_{\fkp}) \left\{ \binom{\nu-1+n}{n} + \binom{\nu+n-1}{n-1} \right\} \\
 &=& \ell_{B_{\fkp}} (B_{\fkp}/I_{\fkp}) \binom{\nu+n}{n}. 
\end{eqnarray*}

The cases $t=0,1$ have been thus proven, we will now proceed by induction on $t$. 
Again, look at the exact sequence (\ref{ex}) localized at $\fkp$. By Proposition \ref{rees} and the induction hypothesis, we then have 
\begin{eqnarray*}
 & & \ell_{B_{\fkp}} \left( ( G_{\nu+t}/L^{\nu+1}G_{t-1} )_{\fkp} \right) \\  
& = & \ell_{B_{\fkp}} \left( ( G_{\nu+t}/L^{\nu+2}G_{t-2} )_{\fkp} \right) - 
\ell_{B_{\fkp}} \left( ( L^{\nu+1}G_{t-1}/L^{\nu+2}G_{t-2} )_{\fkp} \right) 
\\
& = & \ell_{B_{\fkp}} (B_{\fkp}/I_{\fkp}) \binom{\nu+1+n}{n} -
\ell_{B_{\fkp}} \left( (G_{t-1}/LG_{t-2})_{\fkp} \right) \binom{\nu+n}{n-1} \\
&=& \ell_{B_{\fkp}} (B_{\fkp}/I_{\fkp})  \left\{ \binom{\nu+n+1}{  n} - \binom{\nu+n}{ n-1} \right\} \\
&=& \ell_{B_{\fkp}} (B_{\fkp}/I_{\fkp})  \binom{\nu+n}{ n} 
\end{eqnarray*}
as desired. 
\end{proof}

\section{Proof of Theorem \ref{main}}

Theorem \ref{main} will be a consequence of the following more general result: 

\begin{Theorem}\label{main2}
Let $(A, \fkm)$ be a Noetherian local ring of dimension $d>0$. 
\begin{enumerate}
\item[$(1)$] For any rank $r>0$, the inequality 
$$\ell_A(S_{\nu+t}/N^{\nu+1}S_{t-1}) \geq e(F/N)\binom{\nu+d+r-1}{d+r-1} $$ 
always holds true for every parameter module $N$ in $F=A^r$ and all integers $\nu, t \geq 0$, where $NS_{-1}=I(N)$. 
\item[$(2)$] The following statements are equivalent: 
\begin{enumerate}
\item[$(i)$] $A$ is a Cohen-Macaulay local ring; 
\item[$(ii)$] There exists an integer $r>0$ and a parameter module $N$ of rank $r$ in $F=A^r$  such that the equality 
$$\ell_A(S_{\nu+t}/N^{\nu+1}S_{t-1}) = e(F/N)\binom{\nu+d+r-1}{ d+r-1} $$ 
holds true for some integers $0 \leq t \ (\leq d)$ and $\nu \geq 0$.
\end{enumerate}
\end{enumerate}
\end{Theorem}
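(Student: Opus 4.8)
The plan is to deduce Theorem~\ref{main2} from the machinery of section~2 by transferring information from the generic module $L$ in $G=B^r$ down to the original module $N$ in $F$ via the quotient map $\varphi\colon B\to A$. The key structural input is that $\fkb=\Ker\varphi$ is generated by a system of parameters on each module $G_{\nu+t}/L^{\nu+1}G_{t-1}$ (Lemma~1.4(3)), that these modules have finite projective dimension (Lemma~\ref{perfect}(2)), and that their localizations at minimal primes $\fkp\in\Min_B(B/I_r(X)B)$ have the exact length $\ell_{B_\fkp}(B_\fkp/I_\fkp)\binom{\nu+d+r-1}{d+r-1}$ (Proposition~1.6). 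The first reduction I would record is that, by Lemma~1.4(1), the length we want to bound is $\ell_A(S_{\nu+t}/N^{\nu+1}S_{t-1})=\ell_B\bigl((G_{\nu+t}/L^{\nu+1}G_{t-1})\otimes_B B/\fkb\bigr)$, so everything becomes a statement about reducing the $B$-module $G_{\nu+t}/L^{\nu+1}G_{t-1}$ modulo the parameter ideal $\fkb$.

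\textbf{The length inequality (part 1).} For a finitely generated $B$-module $E$ of finite projective dimension on which a system of parameters $\underline{b}$ (generating $\fkb$) acts, the colength $\ell_B(E/\fkb E)$ is bounded below by the multiplicity-type quantity $\sum_{\fkp}\ell_{B_\fkp}(E_\fkp)\cdot\ell_B(B/(\fkb+\fkp))$ summed over the minimal primes $\fkp$ of $\Supp_B E$ of maximal dimension, with equality exactly when $E$ is Cohen-Macaulay. Concretely, I would invoke the associativity formula together with the classical inequality $\ell_B(E/\fkb E)\ge e_{\fkb}(E)=\sum_{\fkp}\ell_{B_\fkp}(E_\fkp)\,e_{\fkb}(B/\fkp)$ for a system of parameters acting on $E$; by Lemma~1.4(2) the relevant primes are exactly $\Min_B(B/I_r(X)B)$, all of dimension $rn$, and by Proposition~1.6 each local length is $\ell_{B_\fkp}(B_\fkp/I_\fkp)\binom{\nu+d+r-1}{d+r-1}$. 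Factoring out the binomial coefficient, the remaining sum $\sum_\fkp \ell_{B_\fkp}(B_\fkp/I_\fkp)\,e_{\fkb}(B/\fkp)$ is precisely the expression for the Buchsbaum-Rim multiplicity $e(F/N)$ realized as the Euler-Poincar\'e characteristic of the Eagon-Northcott complex (the fact flagged in the introduction); this identifies the lower bound as $e(F/N)\binom{\nu+d+r-1}{d+r-1}$, giving part~(1).

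\textbf{The Cohen-Macaulay equivalence (part 2).} For the implication $(i)\Rightarrow(ii)$ I would simply note that when $A$ is Cohen-Macaulay the Brennan-Ulrich-Vasconcelos equality already cited gives equality in the $t=0$ case for every $\nu$. For $(ii)\Rightarrow(i)$, the crucial point is that equality in the multiplicity inequality $\ell_B(E/\fkb E)\ge e_{\fkb}(E)$ forces $E=G_{\nu+t}/L^{\nu+1}G_{t-1}$ to be Cohen-Macaulay as a $B$-module; since $E$ has finite projective dimension, the Auslander-Buchsbaum formula then pins down $\depth_B E$, and I would propagate Cohen-Macaulayness back through the exact sequences~(\ref{ex}) and Proposition~\ref{rees} (which express the graded pieces in terms of $G_t/LG_{t-1}$ and ultimately $B/I_r(X)B$) to conclude that $B$ itself, hence $A=B/\fkb$, is Cohen-Macaulay. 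The main obstacle I anticipate is precisely this equality-case analysis: justifying that equality in the length inequality is inherited by the summands appearing in~(\ref{ex}), so that the Cohen-Macaulay conclusion genuinely descends to $A$ rather than merely to one auxiliary module. Handling the mild asymmetry in the ranges of $t$ (the statement restricts to $0\le t\le d$ in~(ii) but allows all $t\ge 0$ in~(1)) should be routine once the module-theoretic equality case is settled.
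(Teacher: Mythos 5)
Your proposal follows the paper's strategy for part (1) --- reduce via Lemma 2.2(1) to the colength of $\fkb$ on $E=G_{\nu+t}/L^{\nu+1}G_{t-1}$, apply the classical inequality $\ell_B(E/\fkb E)\geq e(\fkb;E)$ for a system of parameters, and use the associativity formula together with Lemma 2.2(2) and Proposition 2.4 to factor out $\binom{\nu+d+r-1}{d+r-1}$ --- but it has a genuine gap at the last step. After factoring, what remains is $e(\fkb;B/I_r(X)B)$, a Samuel multiplicity computed over $B$, and you identify it with $e(F/N)$ by appealing to ``the fact flagged in the introduction''. That fact, however, is $e(F/N)=\chi(K_{\bullet}(\tilde{N}))$, an Euler--Poincar\'e characteristic computed over $A$; by itself it says nothing about $e(\fkb;B/I_r(X)B)$. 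The bridge is precisely the paper's Lemma 3.2: since the Eagon--Northcott complex $K_{\bullet}(X)$ is a $B$-free resolution of $B/I_r(X)B$ (generic acyclicity) and the Koszul complex $K_{\bullet}(\fkb)$ is a $B$-free resolution of $A$, both $\chi(K_{\bullet}(\tilde{N}))$ and $\chi(K_{\bullet}(\fkb)\otimes_B B/I_r(X)B)$ compute $\sum_p(-1)^p\ell_B(\Tor_p^B(B/I_r(X)B,A))$; combined with Serre's theorem $e(\fkb;B/I_r(X)B)=\chi(K_{\bullet}(\fkb)\otimes_B B/I_r(X)B)$ (valid because $\fkb$ is generated by $rn=\dim_B B/I_r(X)B$ elements forming a system of parameters of that module), this yields the identification. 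Without this two-step argument your lower bound is not identified with $e(F/N)$, and part (1) is not proved.

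The second gap is in $(ii)\Rightarrow(i)$, and it is the one you yourself flag as unresolved. You correctly note that equality forces $E$ to be Cohen--Macaulay (of dimension $rn$) and that $E$ has finite projective dimension, but you then propose to ``propagate Cohen--Macaulayness back through the exact sequences $(\ast)$'' down to $B/I_r(X)B$. This plan is both unworkable and unnecessary: in a short exact sequence, Cohen--Macaulayness of the middle term does not descend to the outer terms (the depth lemma gives inequalities in the wrong direction), so the ``inheritance'' problem you anticipate is a real obstruction to your route, not a routine verification. The paper avoids it entirely: since $\depth_B E=\dim_B E=rn$ and $\pd_B E\geq\grade_B E=\grade(I_r(X)B,B)=d$ (grade never exceeds projective dimension, and grade depends only on the support), the Auslander--Buchsbaum formula gives $\depth B=\depth_B E+\pd_B E\geq rn+d=\dim B$, so $B$ is Cohen--Macaulay and hence so is $A\cong B/\fkb$. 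No equality-case analysis of the graded pieces in $(\ast)$ is needed. A minor additional slip: the Brennan--Ulrich--Vasconcelos equality you invoke for $(i)\Rightarrow(ii)$ is the case $t=1$, i.e.\ $\ell_A(S_{\nu+1}/N^{\nu+1})$, not $t=0$; alternatively the paper simply cites the classical parameter-ideal case $r=1$.
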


In order to prove Theorem \ref{main2}, we need to introduce more notation. For any matrix $\fka$ of size $r \times n$ over an arbitrary ring, we denote by $K_{\bullet}(\fka)$ its Eagon-Northcott complex \cite{EN}. When $r=1$, the complex $K_{\bullet}(\fka)$ is just the ordinary Koszul complex of the sequence $\fka$. See \cite[Appendix A2]{E} for the definition and more details of complexes of this type. Recall in particular that if $N$ is a parameter module in a free module $F$ as in section 2, then $$e(F/N)=\chi(K_{\bullet}(\tilde{N})),$$ where $\chi(K_{\bullet}(\tilde{N}))$ denotes the Euler-Poincar\'e characteristic of the complex $K_{\bullet}(\tilde{N})$ (see~\cite{BR2} and~\cite{K}). 

\begin{Lemma}
Using the setting and notation of section 2, we have 
$$\chi(K_{\bullet}(\fkb) \otimes_B (B/I_r(X)B)) = \chi(K_{\bullet}(\tilde{N})).$$
\end{Lemma}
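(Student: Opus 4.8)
The plan is to relate the Euler-Poincar\'e characteristic of the Eagon-Northcott complex $K_{\bullet}(\tilde{N})$ of the original matrix to that of the Koszul complex $K_{\bullet}(\fkb)$ after tensoring with $B/I_r(X)B$, exploiting the fact that the generic matrix $X$ specializes to $\tilde{N}=(c_{ij})$ precisely via the substitution $X_{ij} \mapsto c_{ij}$, whose kernel is $\fkb$. First I would recall that the Eagon-Northcott complex is functorial in the matrix, so the substitution map $B \to A$, $X_{ij}\mapsto c_{ij}$, sends the generic Eagon-Northcott complex $K_{\bullet}(X)$ (over $B$) to $K_{\bullet}(\tilde{N})$ (over $A$). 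More precisely, $K_{\bullet}(\tilde{N}) = K_{\bullet}(X) \otimes_B A = K_{\bullet}(X) \otimes_B (B/\fkb)$, since $A \cong B/\fkb$ via $\varphi$. Thus the right-hand characteristic $\chi(K_{\bullet}(\tilde{N}))$ equals $\chi(K_{\bullet}(X)\otimes_B B/\fkb)$.

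The key idea is then a \emph{change of the order} in which we pass to homology and reduce modulo ideals. The quantity $\chi(K_{\bullet}(X)\otimes_B B/\fkb)$ should be computed by viewing $K_{\bullet}(X)$ as a complex of free $B$-modules resolving (up to the relevant grade) the module $G/L$, whose support is $\Supp_B(B/I_r(X)B)$ by the Lemma in section~2. Since $\fkb$ is generated by a system of parameters for the modules in question (Lemma, part (3)), and $K_{\bullet}(\fkb)$ is the Koszul complex on these $rn$ generators, I would invoke the standard associativity formula for Euler characteristics of tensor products of complexes: for a bounded complex of finite free modules and a Koszul complex, one has a symmetry allowing $\chi$ to be computed by first taking homology of $K_{\bullet}(X)$ and then tensoring with $K_{\bullet}(\fkb)$, or vice versa. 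Concretely, I expect the identity to follow from the general principle that if $C_{\bullet}$ is a finite free complex over $B$ with homology supported on $\Supp_B(B/I_r(X)B)$, then
$$\chi(C_{\bullet}\otimes_B B/\fkb) = \chi(K_{\bullet}(\fkb)\otimes_B H),$$
where $H$ is the appropriate homology module, together with the observation that $K_{\bullet}(X)$ computes the class of $G/L \cong B/I_r(X)B$-twisted data in the relevant Grothendieck group.

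The cleanest route is probably to pass through the Grothendieck group or, equivalently, to use additivity of $\chi$ over short exact sequences of complexes together with the fact that both sides depend only on the class $[B/I_r(X)B]$ in $K_0$ of the category of modules of finite length supported appropriately. Since $K_{\bullet}(X)$ is (in the generic case) acyclic and resolves $B/I_r(X)B$ in the grade-$d$ sense coming from Lemma~\ref{perfect}, and $K_{\bullet}(\fkb)$ is a Koszul complex on a system of parameters, the multiplicativity
$$\chi(K_{\bullet}(\fkb)\otimes_B K_{\bullet}(X)) = \chi\big(K_{\bullet}(\fkb)\otimes_B (B/I_r(X)B)\big) = \chi\big(K_{\bullet}(X)\otimes_B (B/\fkb)\big)$$
should give the claim, the first equality because $K_{\bullet}(X)$ resolves $B/I_r(X)B$ up to the relevant grade and the second by symmetry of the tensor product of complexes (Fubini for double complexes / the fact that $\chi$ of a tensor product of two finite free complexes is symmetric).

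The main obstacle I anticipate is making the \emph{resolution} step rigorous: $K_{\bullet}(X)$ is not literally a free resolution of $B/I_r(X)B$ (it resolves the Eagon-Northcott module $G/L$, i.e. the cokernel, not the Fitting-ideal quotient), so I must be careful about which module sits in homological degree zero and must either appeal to the Buchsbaum-Eisenbud acyclicity result (cited via \cite{BE}) guaranteeing that $K_{\bullet}(X)$ is acyclic with $H_0 = G/L$, and then track the discrepancy between $\ell(G/L)_{\fkp}$ and $\ell(B/I_r(X)B)_{\fkp}$ at each minimal prime, or invoke the definition $e(F/N)=\chi(K_{\bullet}(\tilde{N}))$ directly to sidestep identifying the homology. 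Concretely, the delicate point is justifying the symmetry of $\chi$ for the tensor product of the two complexes together with the vanishing of higher Tor's off the support, which is exactly where Lemma~\ref{perfect} (finite projective dimension and grade $d$) and the system-of-parameters property of $\fkb$ are needed.
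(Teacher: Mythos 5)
Your main displayed argument is essentially the paper's proof: after the base-change identification $K_{\bullet}(\tilde{N}) \cong K_{\bullet}(X) \otimes_B (B/\fkb)$, the chain $\chi(K_{\bullet}(\fkb)\otimes_B K_{\bullet}(X)) = \chi(K_{\bullet}(\fkb)\otimes_B (B/I_r(X)B)) = \chi(K_{\bullet}(X)\otimes_B (B/\fkb))$ is exactly the balancing of $\Tor_{\bullet}^B(B/I_r(X)B,\, A)$, which the paper carries out by computing this Tor once from the free resolution $K_{\bullet}(X)$ of $B/I_r(X)B$ and once from the Koszul resolution $K_{\bullet}(\fkb)$ of $A=B/\fkb$ (the ideal $\fkb$ being generated by a regular sequence of length $rn$). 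In fact the paper proves the stronger statement that the homologies agree degree by degree, not just the Euler characteristics.

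The obstacle you anticipate in your last paragraph, however, is not there, and the workarounds you sketch (tracking length discrepancies at minimal primes, or detouring through the Grothendieck group) are unnecessary. In the convention of \cite{EN} adopted by the paper, the Eagon-Northcott complex $K_{\bullet}(X)$ of the generic matrix has $H_0(K_{\bullet}(X)) = B/I_r(X)B$; the complex whose zeroth homology is the cokernel $G/L$ is the Buchsbaum-Rim complex, a different member of the same family (note also that $G/L$ and $B/I_r(X)B$ are not isomorphic for $r\geq 2$, they merely have the same support). Since $\grade I_r(X)B = n-r+1 = d$ attains the maximal possible value for the generic matrix (see \cite[(2.8) Corollary]{BV}), the complex $K_{\bullet}(X)$, which has length $d$, is acyclic: this is generic perfection of determinantal ideals, not something that needs Lemma 2.4 of the paper. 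So the first equality in your display holds on the nose, $K_{\bullet}(X)$ literally being a finite free resolution of $B/I_r(X)B$, and committing to the main line of your argument reproduces the paper's proof.
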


\begin{proof}
We use an idea from \cite{BR3}. Set $I=I_r(X)$. Since the Eagon-Northcott complex is compatible with the base change, $K_{\bullet}(X) \otimes_B A \cong K_{\bullet}(\tilde{N})$. The complex $K_{\bullet}(X)$ is a $B$-free resolution of $B/IB$ and hence, by tensoring with $A$ and taking the homology, we obtain  
$$H_p(K_{\bullet}(\tilde{N})) \cong  H_p(K_{\bullet}(X) \otimes_B A)  \cong  \Tor_p^B(B/IB, A)$$
for all $p \geq 0$. On the other hand, since the ideal $\fkb$ is generated by a regular sequence of length $rn$ in $B$, the ordinary Koszul complex $K_{\bullet}(\fkb)$ associated to a system of generators of ${\fkb}$ is a $B$-free resolution of $A$. Hence, by tensoring with $B/IB$, we can compute the $\Tor$ as follows: 
\begin{eqnarray*}
\Tor_p^B(B/IB, A) \cong H_p(K_{\bullet}(\fkb) \otimes_B (B/IB)). 
\end{eqnarray*}
Therefore, for any $p \geq 0$, $$H_p(K_{\bullet}(\tilde{N})) \cong H_p(K_{\bullet}(\fkb) \otimes_B (B/IB)). 
$$Thus $\chi(K_{\bullet}(\fkb) \otimes_B (B/IB)) = \chi(K_{\bullet}(\tilde{N}))$ as wanted.
\end{proof}

Now we can give the proof of Theorem \ref{main2}. 

\begin{proof}[Proof of Theorem \ref{main2}]
We use the same notation as in section 2. Put $I=I_r(X)$. 

(1): Fix integers $\nu \geq 0$ and $t \geq 0$. The ideal $\fkb$ being generated by a system of parameters of the module $G_{\nu+t}/L^{\nu+1}G_{t-1}$, we get
\begin{eqnarray*}
 & & \ell_A(S_{\nu+t}/N^{\nu+1}S_{t-1}) \\
&=& \ell_B((G_{\nu+t}/L^{\nu+1}G_{t-1}) \otimes_B(B/\fkb)) \\
&\geq& e(\fkb; G_{\nu+t}/L^{\nu+1}G_{t-1}) \\
&=& \sum_{ \fkp \in \Assh_B (G_{\nu+t}/L^{\nu+1}G_{t-1})} e(\fkb; B/\fkp) \cdot \ell_{B_\fkp} ((G_{\nu+t}/L^{\nu+1}G_{t-1})_{\fkp}) \\
&=& \sum_{\fkp \in \Assh_B (B/IB)} e(\fkb; B/\fkp) \cdot \ell_{B_{\fkp}} ((B/IB)_{\fkp}) \binom{\nu+d+r-1}{d+r-1} \\
&=& e(\fkb; B/IB) \binom{\nu+d+r-1}{d+r-1} \\
&=& \chi(K_{\bullet}(\fkb) \otimes_B (B/IB)) \binom{\nu+d+r-1}{ d+r-1} \\
&=& \chi(K_{\bullet}(\tilde{N}))\binom{\nu+d+r-1}{  d+r-1} \\
&=& e(F/N) \binom{\nu+d+r-1}{ d+r-1} 
\end{eqnarray*}
as desired, where $e(\fkb; \ast)$ denotes the multiplicity of $\ast$ with respect to $\fkb$.

(2): The other implication being clear, by the ideal case, for example, it is enough to show that (ii) implies (i). Assume thus that 
$$\ell_A(S_{\nu+t}/N^{\nu+1}S_{t-1})=e(F/N) \binom{\nu+d+r-1 }{ d+r-1}$$ for some $\nu \geq 0$ and $t \geq 0$. The above argument then gives $$\ell_B((G_{\nu+t}/L^{\nu+1}G_{t-1}) \otimes_B(B/\fkb)) = e(\fkb; G_{\nu+t}/L^{\nu+1}G_{t-1}).$$ 
It follows that $G_{\nu+t}/L^{\nu+1}G_{t-1}$ is a Cohen-Macaulay $B$-module of dimension $rn$. By Lemma \ref{perfect}, $G_{\nu+t}/L^{\nu+1}G_{t-1}$ is a $B$-module with finite projective dimension. Thus, by the Auslander-Buchsbaum formula,
\begin{eqnarray*}
\depth B &=& \depth_B(G_{\nu+t}/L^{\nu+1}G_{t-1}) + \pd_B (G_{\nu+t}/L^{\nu+1}G_{t-1}) \\
 &\geq & \dim_B(G_{\nu+t}/L^{\nu+1}G_{t-1}) +\grade_B (G_{\nu+t}/L^{\nu+1}G_{t-1}) \\
 &=& rn+d  \\
 &=& \dim B. 
\end{eqnarray*}
Therefore $B$ is Cohen-Macaulay so that $A$ is Cohen-Macaulay, too.  
\end{proof}

Taking $t=1$ in Theorem \ref{main2}, now readily gives Theorem \ref{main}. 

\begin{Remarks}
{\rm
Suppose that $A$ is Cohen-Macaulay. Because of Lemma \ref{perfect} (1) the above argument shows that the equality 
$$\ell_A(S_{\nu}/I(N)N^{\nu})=\ell_A(S_{\nu+1}/N^{\nu+1})=e(F/N) \binom{\nu+d+r-1 }{ d+r-1}$$ 
holds true for all $\nu \geq 0$. When $t \geq 2$, we do not know whether the Cohen-Macaulayness of $A$ implies the equality
$$\ell_A(S_{\nu+t}/N^{\nu+1}S_{t-1}) = e(F/N)\binom{\nu+d+r-1 }{ d+r-1},$$ 
except in the following cases:
\begin{enumerate}
\item[$(1)$] When $0 \leq t \leq d$, the equality $\ell_A(S_t/NS_{t-1})=e(F/N)$ holds true by \cite[Theorem 4.1]{HH2}. 
\item[$(2)$] When $d=2$, we know by \cite[Theorem 4.1]{HH1} that $G_{\nu+2}/L^{\nu+1}G_1$ is a perfect $B$-module of grade two for all $\nu \geq 0$. The argument in the proof of 
Theorem \ref{main2} then gives $$\ell_A(S_{\nu+2}/N^{\nu+1}S_1)=e(F/N)\binom{\nu+r+1}{r+1}$$ 
for all $\nu \geq 0$. 
\item[$(3)$] If $t \geq d+1$, then $\pd_B (G_t/LG_{t-1}) \geq d+1$ (see \cite[(2.19) Remarks]{BV}, for instance). So the equality does not hold in this case. 
\end{enumerate}
}
\end{Remarks}



\end{document}